\providecommand\@dotsep{5}
\def\listtodoname{List of Todos}
\def\listoftodos{\@starttoc{tdo}\listtodoname}
\newtheorem{theorem}{Theorem}[section]
\newtheorem{proposition}[theorem]{Proposition}
\newtheorem{corollary}[theorem]{Corollary}
\newtheorem{lemma}[theorem]{Lemma}
\newcommand{\mycomment}[1]{}
  \theoremstyle{definition}
\newtheorem{definition}[theorem]{Definition}
\newtheorem{example}[theorem]{Example}
\newtheorem{remark}{Remark}
\newtheorem{question}[theorem]{Question}
\newcommand{\calA}{{\mathcal A}}
\newcommand{\calF}{{\mathcal F}}
\newcommand{\calY}{\mathcal Y}
\newcommand{\asA}{{\mathbf A}} 
\newcommand{\asB}{{\mathbf B}} 
\newcommand{\T}{\mathscr T}
\newcommand{\nbeq}{\begin{equation}}
\newcommand{\neeq}{\end{equation}}
\newcommand{\beq}{\begin{equation*}}
\newcommand{\eeq}{\end{equation*}}
\DeclareMathOperator{\vcd}{vcd}
\DeclareMathOperator{\gdfin}{\underline{gd}}
\DeclareMathOperator{\Mod}{Mod}
\DeclareMathOperator{\PMod}{PMod}
\newcommand{\modgs}{\Mod_{g}^{s}}
\newcommand{\pmodgs}{\PMod_{g}^{s}}
\newcommand{\Tgs}{\T_{g}^{s}}
\DeclareMathOperator{\rank}{rank}
\begin{document}

\title[]{On the dimension of Harer's spine\\ for the decorated Teichmüller space}

\author[N. Colin]{Nestor Colin}
\author[R. Jiménez Rolland]{Rita Jiménez Rolland}
\author[P. L. León Álvarez ]{Porfirio L. León Álvarez}
\address{Instituto de Matemáticas, Universidad Nacional Autónoma de México. Oaxaca de Juárez, Oaxaca, México 68000}
\email{rita@im.unam.mx}
\email{ncolin@im.unam.mx}
\email{porfirio.leon@im.unam.mx}

\author[L. J. Sánchez Saldaña]{Luis Jorge S\'anchez Salda\~na}
\address{Departamento de Matemáticas, Facultad de Ciencias, Universidad Nacional Autónoma de México}
\email{luisjorge@ciencias.unam.mx}




\date{}

\keywords{Mapping class groups, Teichm\"uller space, spines, classifying spaces for proper actions}

\subjclass{57K20, 55R35, 20J05, 57M07, 57M50, 57M60}

\begin{abstract}
  In \cite{Ha86} Harer  explicitly constructed a spine for the decorated Teichm\"uller space of orientable surfaces with at least one puncture and negative Euler characteristic. In this paper we point out  some instances where his computation of the dimension of this spine is off by $1$ and give the correct dimension. 
\end{abstract}
\maketitle


\section{Introduction}


 Let \( F \) be an orientable closed connected surface of genus \( g \) and consider \( \{p_1, \ldots, p_s\} \)  a collection of distinguished points in \( F\).   The {\it mapping class group} $\modgs$ is the group of isotopy classes of all orientation-preserving  diffeomorphisms  $f:F_g^s\rightarrow F_g^s$, where $F_g^{s}:=F-\{p_1, \ldots, p_s\}$. The group $\modgs$ permutes the  set $\{p_1, \ldots, p_s\}$ 
and the kernel of such action  is the {\it pure mapping class group} $\pmodgs$. These groups are related by the following short exact sequence
$$
    1\rightarrow \pmodgs\rightarrow \modgs\rightarrow \frak{S}_s\rightarrow 1,
$$
where $\frak{S}_s$ denotes the symmetric group on $s$ letters.  The  groups $\modgs$ and $\pmodgs$ act on the \textit{Teichmüller space} $\Tgs$ of conformal equivalence classes of marked Riemann surfaces of genus $g$ with $s$ distinguished points.

 In the influential paper \cite{Ha86}, J. L. Harer established several cohomology properties of $\PMod_{g}^s$, including the computation of its virtual cohomological dimension: 
\[
\vcd(\pmodgs) =
\begin{cases}
s-3 & \text{if } g=0 \text{ and } s\geq 3 \\
1 & \text{if } g=1 \text{ and } s= 0 \\
4g-5 & \text{if } g\geq 2 \text{ and } s=0\\
4g-4+s & \text{if } g\geq 1 \text{ and } s\geq 1.
\end{cases}
\]
 In particular, for $s\geq 1$, $2g+s>2$,  and   $\Delta =\{p_1, \ldots, p_m\}$ with $1\leq m\leq s$,  Harer explicitly described  a cell complex $\mathcal{Y}(=\mathcal{Y}_g^{s,m})$  inside the {\it decorated Teichm\"uller space} $\Tgs(\Delta)$ 
 onto which  $\Tgs(\Delta)$ may be $\PMod_{g}^s$-equivariantly retracted.  
In \cite[Theorem 2.1]{Ha86} Harer states  that the dimension of $\mathcal{Y}$ is $4g-5+s+m$. However, while revising the details of his construction we observed that this computation  is off by $1$ when the parameter $m<s$; see for instance \cref{EXA} below. In this paper we compute the correct dimension of $\mathcal{Y}$. 

In what follows we refer to $\mathcal{Y}$ as  {\it Harer's spine of the decorated Teichm\"uller space}. 

\begin{theorem}[Dimension of Harer's spine]\label{thm:main}
Let $s\geq 1$, \  $2g+s>2$, and $1\leq m\leq s$. The dimension of the spine $\calY(=\mathcal{Y}_g^{s,m})$ is given by 

    \[
\dim(\calY) = \begin{cases}
    4g-4+s+m & \text{ if } m<s\\
    4g-5+s+m & \text{ if } m=s.
\end{cases} 
\]
\end{theorem}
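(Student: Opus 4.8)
The plan is to realise $\calY$ through the combinatorics of arc systems and to reduce the computation of $\dim(\calY)$ to an Euler-characteristic count. Recall that Harer's construction presents $\Tgs(\Delta)$ with a cell decomposition whose open cells $C_\alpha$ are indexed by the filling arc systems $\alpha$ with endpoints at the $m$ decorated punctures, with $\dim C_\alpha = |\alpha|$ (the number of arcs), the top cells being the ideal triangulations. The spine $\calY$ is the dual complex to this decomposition (equivalently, the order complex of the poset of filling arc systems), so the cell of $\calY$ dual to $C_\alpha$ has dimension $N_{\max} - |\alpha|$, where $N_{\max} = \dim \Tgs(\Delta)$. Consequently $\dim(\calY) = N_{\max} - N_{\min}$, where $N_{\min}$ is the least number of arcs occurring in a filling arc system, and the whole problem reduces to computing $N_{\max}$ and $N_{\min}$.

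For the count I would fix a filling arc system $\alpha$ and regard $F_g^s$, with the undecorated punctures treated as marked interior points rather than vertices, as a CW structure on the closed genus-$g$ surface: the $m$ decorated punctures are the vertices, the arcs are the edges, and the complementary regions are the $2$-cells. By admissibility each region is a disk carrying at most one undecorated puncture, so there are exactly $s-m$ once-punctured regions together with some number $t\ge 0$ of unpunctured disks. Euler's formula $m - |\alpha| + \bigl(t + (s-m)\bigr) = 2-2g$ then yields the key identity
\[
|\alpha| = 2g - 2 + s + t.
\]
Maximising $t$ (all unpunctured regions triangles, all punctured regions monogons) recovers $N_{\max} = 6g-6+2s+m$, which agrees with $\dim\Tgs(\Delta)$; minimising $|\alpha|$ therefore amounts to minimising $t$.

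The decisive point is the range of $t$. When $m=s$ there are no undecorated punctures, so the complement is a nonempty union of unpunctured disks and $t\ge 1$; hence $N_{\min}=2g-1+s$ and $\dim(\calY)=4g-5+s+m$. When $m<s$, however, one can take $t=0$: the surface may be cut by arcs into $s-m$ once-punctured disks with no unpunctured region at all, giving $N_{\min}=2g-2+s$ and $\dim(\calY)=4g-4+s+m$. This extra unit in $N_{\min}$ at $m=s$ is precisely the source of the $+1$ discrepancy with Harer's stated dimension. I expect the main obstacle to be twofold. First, extracting from Harer's construction the exact cell-dimension bookkeeping so that the identity $\dim(\calY)=N_{\max}-N_{\min}$ is rigorous, in particular that the top cells of $\calY$ are genuinely realised by the minimal filling systems. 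Second, and more substantively, exhibiting for every admissible triple $(g,s,m)$ with $m<s$ an honest filling arc system whose complementary regions are \emph{all} once-punctured, i.e.\ the $t=0$ configuration; it is this existence statement, which fails when $m=s$, that forces the corrected value of the dimension.
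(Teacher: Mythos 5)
Your overall strategy coincides with the paper's: realize $\calY$ as the order complex (equivalently, the barycentric subdivision of the dual complex) of the poset of filling arc systems, so that $\dim(\calY)=N_{\max}-N_{\min}$, and compute both extremes by an Euler-characteristic count on the closed surface. Your identity $|\alpha|=2g-2+s+t$ is exactly the computation in the paper's proof (carried out there separately for minimal systems in the two cases), and your dichotomy --- $t=0$ achievable precisely when $m<s$, while $t\geq 1$ is forced when $m=s$ --- is exactly the paper's explanation of the $+1$ discrepancy with Harer's count. The final numbers agree.

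However, there is a genuine gap, and it sits exactly where you flag it. The inequality $t\geq 0$ (resp.\ $t\geq 1$ when $m=s$) together with your identity only bounds $N_{\min}$ from below, hence only bounds $\dim(\calY)$ from \emph{above}. The statement you defer --- that for every $(g,s,m)$ with $m<s$ there actually exists a filling system all of whose complementary regions are once-punctured discs (and, when $m=s$, one with a single disc region) --- is precisely the lower bound $\dim(\calY)\geq 4g-4+s+m$, i.e.\ the entire content of the correction to Harer's Theorem 2.1; without it nothing in your argument rules out Harer's smaller value. The paper closes this gap with no explicit construction: minimal filling systems exist for trivial finiteness reasons, and an arc-removal argument shows any minimal one has the required shape. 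Concretely, if a minimal filling system had an unpunctured disc piece $B$, then connectedness of $F_0$ together with $P\neq\emptyset$ forces some arc of $\partial B$ to be shared with a \emph{different} piece, and deleting that arc still fills, contradicting minimality; conversely, systems of that shape admit no filling proper subsystem, and the Euler count applied to them pins down $N_{\min}$ exactly. You would need either to reproduce this argument or to carry out the explicit constructions you postpone (the paper's \cref{EXA} does the latter for $g\geq 1$, $s=2$, $m=1$). Two smaller points: (i) the reduction $\dim(\calY)=N_{\max}-N_{\min}$ also uses that supersets of filling systems fill, that every filling system extends to a maximal one, and that all maximal systems have the same cardinality $6g-6+2s+m$ --- this is the content of \cref{prop:characterization:maximal:arc:systems,prop:A:contained:in:maximal,lem:dimension:maximal:arc:system,explicit:arc:system:maximal}, and your route via ``maximizing $t$'' needs the side count $2|\alpha|\geq 3t+(s-m)$ to rule out larger $t$; (ii) your bookkeeping $\dim C_\alpha=|\alpha|$ and $N_{\max}=\dim\Tgs(\Delta)$ are each off by one (an arc system with $k$ arcs spans a $(k-1)$-cell, and $\dim\Tgs(\Delta)=6g-7+2s+m$); the two slips cancel in the formula $N_{\max}-|\alpha|$ for the dual cell, but they should be fixed.
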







  In the literature, Harer's spine $\mathcal{Y}$ (when $m=1$) has been cited to justify the existence of a model for $\underline E \modgs$, the {\it classifying space for proper actions of $\modgs$}, of minimal dimension. This use of Harer's spine is not correct when $s\geq 2$, and part of our motivation to write this paper is to clarify it. 
  The misquotation comes from two directions:

  \begin{itemize}
      \item[a)] {\it The complex $\calY(=\calY_g^{s,1})$ is not a model for $\underline E \modgs$ when $s\geq2$.} For $2g+s>2$, the Teichmüller space $\Tgs$ is known to be a model for $\underline E \modgs$; see for instance \cite[Proposition 2.3]{WolpertJi} and \cite[Section 4.10]{Lu05}. When $m=1$, Harer's spine $\calY$ is a $\pmodgs$-equivariant deformation retraction of $\Tgs(\Delta)=\Tgs$ and  hence it is a model for $\underline E\pmodgs$. On the other hand, from the definition of Harer's spine, for $m<s$ the complex $\calY$ only admits a $\pmodgs$-action; see \cref{sec:spine} for details. Therefore, despite $\Tgs$  admitting a $\modgs$-action, the spine $\calY$ cannot be a $\modgs$-equivariant retraction of $\Tgs$, and it is not a model for $\underline E \modgs$ when $s\geq 2$. This misquotation appears in \cite[Introduction]{WolpertJi}, \cite[Proof of Theorem 1.1]{AMP14}, \cite[Introduction]{Ji14}, \cite[Subsection 3.2]{Ramon:Juan}, and \cite[Proof of Theorem 1.5]{JRLASS24}. \\

      \item[b)] {\it The dimension of $\calY(=\calY_g^{s,1})$ is not always equal to $\vcd(\pmodgs)$.} For the case $g=0$, $s\geq 3$ and $m=1$, and   the case $g\geq 1$  and  $s=m=1$, we have that $\dim\mathcal{Y}=\vcd(\PMod_g^s)$.  However,  for $g\geq 1$ and  $s\geq 2$, the computation in \cref{thm:main} shows that $$\dim\mathcal{Y}=4g-4+s+1=\vcd(\PMod_g^s)+1$$ and hence $\calY$ is not a model of minimal dimension in this case.  This misquotation appears in  \cite[Section 1]{BV06}, \cite[Section 2.2]{Gui2010} \cite[Introduction]{AMP14}, \cite[Introduction]{Ji14}, \cite[Section 10]{Hensel14},  \cite[Subsection 3.2]{Ramon:Juan}, and \cite[Proof of Theorem
1.5]{JRLASS24}. We indicate in \cref{Propgd} how the dimension of  Harer's spine  when $s=m=1$, and the Birman exact sequence can be used to show the existence of a model for $\underline E \pmodgs$ of minimal dimension when $s\geq 2$.  \\
  \end{itemize}

To the best of our knowledge it is still unknown whether there exists a model for $\underline E\modgs$ of dimension $\vcd(\modgs)$ for $s\geq 2$.  Although \cite[Corollary 1.3]{AMP14} claims the existence of such models, their proof \cite[Remark 4.5]{AMP14} relies on an inductive argument using the Birman short exact sequence and it only applies for pure mapping class groups. We want to point out that \cite[Corollary 1.3]{AMP14} is cited for instance in \cite[Theorem 4.1, Proof of Theorem 1.5]{AJPTN18}, \cite[Introduction, Theorem 5.1]{JPT16}, and \cite[Proof of Proposition 5.3]{Nucinkis:Petrosyan}.  

When $s=m$, it follows from Harer's construction that the retraction of $\Tgs(\Delta)$ onto Harer's spine $\calY(=\calY_g^{s,s})$ is actually $\modgs$-equivariant. Such spine $\calY$ is of dimension  $4g-5+2s$ and it can be shown that it gives a model for $\underline E \modgs$. Since $\Tgs(\Delta)$ is topologized as the product of $\Tgs$ with an open $(s-1)$-simplex, and $\modgs$ acts diagonally, there is  a natural $\modgs$-projection $\pi:\Tgs(\Delta)\rightarrow \Tgs$.   Hence, the image  $\pi(\calY)$  is a $\modgs$-equivariant deformation retract of $\Tgs$ and it is a model for $\underline E \modgs$  of dimension at most $4g-5+2s$. 
\begin{question} Is $\pi(\calY)$ a model for $\underline E \modgs$ of minimal dimension  when $s\geq 1$ and $2g+s>2$?
\end{question}

In a forthcoming paper \cite{Los4} we use Harer's spine to construct a spine for the Teichm\"uller space of non-orientable punctured surfaces $N_g^s$ (with $s\geq 1$) with negative Euler characteristic. When $s=1$ this spine gives us a model of minimal dimension for $\underline{E}\Mod(N_g^1)$. \\
 
{\noindent\bf Some comments on spines for Teichm\"uller space $\T_{g}$}. A related question is whether Teichmüller space $\T_{g}$ admits a $\Mod_g$-equivariant deformation retraction onto a cocompact spine whose dimension is equal to $\vcd(\Mod_g)$, see \cite[Question 1]{BV06}.  Since Harer's construction needs at least one marked point in the surface, it cannot be used to address this question.

For $g\geq 2$, W. Thurston proposed a candidate for a spine for $\T_g$ in a hard to find preprint \cite{Th85}; see also \cite[Remark 4.4]{Ji14}. M. Fortier Bourque proved in \cite[Theorem 1.1]{FB24} that in general this spine is not of minimal dimension. In a recent preprint \cite[Theorem 1]{Irmer2022} I. Irmer claims the existence of  a $\Mod_g$-equivariant deformation retraction of the Thurston spine onto a CW-complex of dimension equal to $\vcd(\Mod_g)$. 

 On the other hand, for any genus $g\geq 1$, S.A. Broughton  proved in \cite[Theorem 2.7]{Bro90}   that  $\T_g$ contains a $\Mod_g$-subspace which is a strong $\Mod_g$-deformation retract, and which is a co-compact model for $\underline E \Mod_g$; see  also \cite[Section 3]{Gui2010}.  L. Ji  constructed in \cite{Ji14} a couple of spines for $\T_{g}$ that give cocompact models for $\underline E \Mod_g$.  Ji proves that one his constructions gives a spine of codimension at least $2$ in $\T_{g}$ \cite[Proposition 4.3]{Ji14} and for genus $g=2$ it is of minimal dimension.

Furthermore, in \cite[Theorem 1.1]{AMP14} J. Aramayona and C. Martínez-Pérez proved that there exists a cocompact model for $\underline E \mathrm{Mod}_g$ of dimension $\vcd(\mathrm{Mod}_g)$,  for every $g\geq 0$. Their proof is not constructive and does not give a spine for $\T_{g}$ when $g\geq 2$. 
For genus $g=2$, their argument has a flaw: it relies on the fact that $\Mod_2$ is a central extension of $\Mod_0^6$ by $\mathbb{Z}_2$, and it assumes incorrectly that Harer's spine gives a co-compact model for $\underline E \mathrm{Mod}_0^6$; see item a) above. 
Instead, Ji's spine of minimal dimension for genus $g=2$ can be cited.

\bigskip

\noindent{\bf Organization of the paper.}  In \cref{sec:arcs} we recall the definition of the arc complex $\mathcal{A}=\mathcal{A}(\Delta)$ considered by Harer in \cite[Section 1]{Ha86} and compute its dimension. This arc complex defines an ideal triangulation $\mathcal{A}(\Delta)-\mathcal{A}_{\infty}(\Delta)$ of the decorated Teichm\"uller space $\Tgs(\Delta)$ which Harer used to construct his spine in \cite[Section 2]{Ha86}.  In \cref{sec:spine} we review the definition of Harer's spine and compute its dimension proving \cref{thm:main}.\bigskip 

\noindent{\bf Acknowledgments.}  We thank Javier Aramayona, Mladen Bestvina, Maxime Fortier Bourque, John Harer, Conchita Mart\'inez-P\'erez, and Andrew Putman  for useful communication. The first author was funded by CONAHCyT through the program \textit{Estancias Posdoctorales por México.} The third author's work was supported by UNAM \textit{Posdoctoral Program (POSDOC)}. All authors are grateful for the financial support of DGAPA-UNAM grant PAPIIT IA106923.

\section{Arc systems and Harer's complex of arcs}\label{sec:arcs}

 We recall the definition of Harer's complex of arcs $\mathcal{A}=\calA(\Delta)$ from \cite[Section1]{Ha86}. 
For the sake of completeness we include a full-detailed computation of its dimension in \cref{thm:arc:dim}.

Let \( F \) be a closed, orientable surface of genus \( g \) 
and let \( \{p_1, \ldots, p_s\} \) be a collection of distinguished points in \( F\). 
If \( \Delta = \{p_1, \ldots, p_m\} \), \( m \leq s \), we write \( P = \{p_{m+1}, \ldots, p_s\} \) and \( F_0 = F - P \) so that the surface \( F_0 \) has \( m \) distinguished points and \( s-m \) punctures. In what follows, the parameters $r$ and $n$ appearing in \cite[Section 1]{Ha86} are assumed to be zero.


A properly imbedded path in \( F_0 \) between two points of \( \Delta \) 
will be called a \( \Delta \)-arc. The isotopy class in \( F_0 \) (rel \( \Delta \)) \( [\alpha_0, \ldots, \alpha_k] \) of a family of \( \Delta \)-arcs will be called a {\it rank-\( k \) arc system} if:

\begin{enumerate}
\item \( \alpha_i \cap \alpha_j \subset \Delta \) for distinct \( i \) and \( j \), and
 \item for each connected component \( B \) of the surface obtained by \textit{ splitting \( F_0 \) along \( \alpha_0, \ldots, \alpha_k \)}, the Euler characteristic of the double of \( B \) along \( \partial B - \Delta \) is negative. 
\end{enumerate}

The condition (2) ensures that $\Delta$-arcs are not null-homotopic (rel $\Delta$), and no two distinct $\Delta$-arcs are homotopic (rel $\Delta$).

\begin{remark}
    Let us give a detailed explanation of \textit{the connected components} appearing in condition (2). Consider the surface $F_0$ and a collection of $\Delta$-arcs \( \alpha_0, \ldots, \alpha_k \) such that  condition (1) holds. Let $B^o$ be one of the connected components of the open surface $F_0-\cup_{i=0}^k \alpha_i$. Hence $B^o$ is the interior of a surface $B$ with non-empty boundary, and we have an \textit{attaching map} $\phi_B\colon \partial B\to \cup_{i=0}^k \alpha_i$. Since $\cup_{i=0}^k \alpha_i$ is canonically a 1-dimensional CW-complex (with 0-skeleton contained in $\Delta$), we can pull-back that cellular structure to $\partial B$. By an abuse of notation we call $\Delta$ the $0$-skeleton of $\partial B$.  We will be considering each $B$ as a polygon possibly with punctures and every edge will be labeled with the corresponding $\Delta$-arc in the the system. 
\end{remark}

\begin{remark}\label{arc:system:deter:CW:structure}
    Assume $s=m$, that is we have no punctures and $F_0=F$. If each connected component of $F-\cup_{i=1}^k \alpha_i$ is homeomorphic to a disk, then the $\Delta$-arcs determine a CW-structure on $F$.  
\end{remark}

\begin{definition}[Harer's complex of arcs] Let \(\calA=\calA(\Delta)\) be the  simplicial complex that has  a \(k\)-simplex \(\langle \alpha_0, \ldots, \alpha_k\rangle\) for each rank-\(k\) arc system in \(F_0\) and such that \(\langle\beta_0, \ldots, \beta_l\rangle\) is  a face of \(\langle\alpha_0, \ldots, \alpha_k\rangle\) if \(\{[\beta_0], \ldots, [\beta_l]\}\) is contained in \(\{[\alpha_0], \ldots, [\alpha_k]\}\). 
    
\end{definition}


\begin{theorem}\label{thm:arc:dim}
The dimension of \(\calA(\Delta)\) is $6g-7+2s+m$. 
\end{theorem}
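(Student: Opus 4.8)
The plan is to compute the largest number of $\Delta$-arcs occurring in an arc system, since $\dim\calA(\Delta)$ is one less than this number. The first step is to rewrite condition (2) in a usable form. Given a complementary region $B$ of the arcs, say of genus $h$, with $b$ boundary circles made of $\Delta$-arcs, carrying $c$ corners in $\Delta$ (counted with multiplicity) and containing $t$ of the punctures of $P$, I would note that doubling $B$ along its boundary circles yields a closed surface of genus $2h+b-1$, in which the $t$ punctures become $2t$ punctures and each of the $c$ corners becomes one further puncture (the two copies of $B$ only touch there). Hence the double $DB$ satisfies
\[
\chi(DB)=4-4h-2b-2t-c ,
\]
and condition (2) becomes the inequality $4h+2b+2t+c>4$. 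In particular a triangle ($h=0,\ b=1,\ t=0,\ c=3$) and a once-punctured monogon ($h=0,\ b=1,\ t=1,\ c=1$) both double to a thrice-punctured sphere with $\chi(DB)=-1$.

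The main step, and the one I expect to be the real obstacle, is to show that in a maximal arc system every complementary region is either a triangle or a once-punctured monogon; equivalently, these are exactly the regions that admit no further subdividing $\Delta$-arc. That neither can be cut is easy: a triangle has no essential diagonal, and any $\Delta$-arc inside a once-punctured monogon cuts off an unpunctured bigon, for which $\chi(DB)=4-2-2=0$, violating (2). The substantive direction is that every other region can be cut into smaller regions each still satisfying $4h+2b+2t+c>4$: one excises a once-punctured monogon around a puncture when $t\ge 2$, adds a diagonal when $B$ is an unpunctured polygon with $c\ge 4$, or runs a $\Delta$-arc that lowers the genus or joins two boundary circles when $h\ge 1$ or $b\ge 2$. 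Checking case by case that the resulting pieces still satisfy the inequality is the delicate part; the point is that the inequality is tight exactly at triangles and once-punctured monogons. I would also record that, since $2g+s>2$, a maximal arc system exists, so the top dimension is actually attained.

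With the classification established, the dimension count is routine. Let $T$ and $M$ be the numbers of triangles and once-punctured monogons in a maximal system; since each of the $s-m$ punctures of $P$ lies in exactly one monogon, $M=s-m$. Counting incidences between arcs and the sides of regions gives $2E=3T+M$, where $E$ is the number of arcs. Decomposing $F_0$ into its open cells and using that the compactly supported Euler characteristic agrees with $\chi$ on the surface $F_0$, the open triangles contribute $+1$ each, the open once-punctured monogons contribute $0$ each, the open arcs contribute $-1$ each, and the $m$ vertices in $\Delta$ contribute $+1$ each, so that
\[
T-E+m=\chi(F_0)=2-2g-(s-m).
\]
Eliminating $T$ between $2E=3T+(s-m)$ and this identity gives $E=6g-6+2s+m$, whence $\dim\calA(\Delta)=E-1=6g-7+2s+m$.
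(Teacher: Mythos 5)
Your proposal is sound and shares its backbone with the paper's argument: the crucial step in both is the classification of maximal arc systems as exactly those splitting $F_0$ into triangles and once-punctured monogons (the paper's \cref{prop:characterization:maximal:arc:systems}), proved in both cases by the same style of arc-insertion case analysis (the paper checks the cases with figures; you sketch them and rightly flag this as the substantive step). What you do differently is everything around that lemma, and your count is genuinely cleaner. The paper computes the rank of a maximal system in two cases: for $m=s$ it uses the induced CW-structure on $F$ and $2E=3T$; for $m<s$ it reduces to the closed case by capping the punctures and deleting two arcs per monogon, which needs the auxiliary observation that two once-punctured monogons never share a side (with one exceptional surface, the twice-punctured sphere with one marked point). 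Your count --- $2E=3T+M$, $M=s-m$, and additivity of the compactly supported Euler characteristic --- handles both cases uniformly, with no case split and no exceptional surface. You also turn condition (2) into the explicit inequality $4h+2b+2t+c>4$ via $\chi(DB)=4-4h-2b-2t-c$, which the paper never writes down and which is what makes the insertion case analysis verifiable. For the lower bound, the paper constructs an explicit maximal system (\cref{explicit:arc:system:maximal}); you instead invoke bare existence of a maximal system, which buys brevity at the cost of the first loose end below.

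Two loose ends, both repairable inside your own framework. First, ``a maximal arc system exists'' is not automatic: you need to know arcs cannot be added indefinitely, i.e., an a priori bound on the number of arcs in any arc system. Your reformulation supplies one: summing $-\chi(DB_i)\geq 1$ over the regions and using $\sum_i c_i = 2E$ together with $\chi_c$-additivity gives $\sum_i \bigl(-\chi(DB_i)\bigr) = 2V-2\chi_c(F_0)\leq 4g+2s-4$, which bounds the number of regions and hence of arcs; alternatively, exhibit one explicit system as the paper does. Second, your parameters $(h,b,t,c)$ ignore marked points of $\Delta$ lying in the \emph{interior} of a region; condition (2) permits these (they do not change $\chi(DB)$), so your classification must also insert arcs through interior marked points (compare the paper's \cref{lem:arc:system:triangle}), and your final count tacitly uses $V=m$, which is precisely the assertion that a maximal system leaves no marked point in a region's interior. (A small slip: an arc inside a once-punctured monogon may cut off an unpunctured monogon rather than a bigon, but condition (2) fails either way.)
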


The proof of this theorem follows directly from \cref{explicit:arc:system:maximal} (lower bound) and \cref{lem:dimension:maximal:arc:system} (upper bound).

\begin{lemma}\label{lem:arc:system:triangle}
    Let $\asA$ an arc system in $F_0$ such that all the pieces of the splitting of $F_0$ along $\asA$ are triangles except for one of the pieces which is a triangle $T$ with $m$ marked points and $n$ punctures within its interior. Then there is an arc system $\asB$, obtained by adding $3m+2n$ $\Delta$-arcs to $\asA$, that splits $F_0$ into triangles and once-punctured monogons.
\end{lemma}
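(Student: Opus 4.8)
The plan is to reduce the statement to a purely local problem inside the distinguished triangle $T$ and then to separate it into an \emph{existence} part and a \emph{counting} part. Since every piece of the splitting of $F_0$ along $\asA$ other than $T$ is already a triangle, and a triangle admits no further essential $\Delta$-arc, any arcs we add to refine the splitting into triangles and once-punctured monogons must lie inside $T$. Thus it suffices to split the punctured polygon $T$ --- a disk carrying $3$ boundary vertices in $\Delta$ together with $m$ marked points and $n$ punctures in its interior --- into triangles and once-punctured monogons by adding $\Delta$-arcs, and to check that exactly $3m+2n$ arcs are required.

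For existence I would extend $\asA$ to a maximal arc system $\asB$ (maximal systems exist because $\calA(\Delta)$ is finite dimensional). The key point is to classify the complementary pieces of a maximal system. For any piece $B$, condition (2) requires that the double of $B$ along $\partial B - \Delta$ have negative Euler characteristic; a short computation gives $\chi(\mathrm{double}) = 2\chi(B) - \chi(\partial B - \Delta)$, which equals $-1$ precisely when $B$ is a triangle or a once-punctured monogon and is $\le -2$ for every larger piece (a polygon with at least four sides, a punctured polygon with at least two sides, a multiply punctured piece, or a piece of positive genus). In each of these larger cases an essential $\Delta$-arc can be drawn, contradicting maximality; hence every piece of $\asB$ is a triangle or a once-punctured monogon. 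As the pieces of $\asA$ outside $T$ are already triangles, all the new arcs of $\asB$ lie in $T$, and one checks directly that $\asB$ satisfies conditions (1) and (2).

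It remains to count the added arcs, and here the combinatorial type of the pieces forces the answer independently of the chosen extension. In the resulting decomposition of $T$ let $t$ be the number of triangles; since each once-punctured monogon contains exactly one puncture and each puncture lies in exactly one such monogon, there are exactly $n$ once-punctured monogons. Regarding the closed disk $T$ with the punctures filled in (so that each monogon is a genuine $2$-cell) as a CW complex with $V = m+3$ vertices, $F = t+n$ faces, and $E$ edges, the identity $V - E + F = 1$ together with the edge-incidence relation $2E - 3 = 3t + n$ (the three boundary edges of $T$ bound a single face inside $T$, all other edges bound two) yields $t = 2m + n + 1$ and $E = 3m + 2n + 3$. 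Subtracting the three original boundary edges of $T$, exactly $3m + 2n$ new $\Delta$-arcs have been added, as claimed.

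I expect the main obstacle to be the existence/structure step: proving that a maximal extension of $\asA$ cuts $T$ into triangles and once-punctured monogons, i.e.\ that no other minimal complementary region is compatible with condition (2). Once this classification is in hand, the count is the routine Euler characteristic bookkeeping above.
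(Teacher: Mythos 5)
Your proposal is correct, but it takes a genuinely different route from the paper. The paper's proof of this lemma is purely constructive and elementary: it exhibits the new arcs explicitly (three $\Delta$-arcs fanning out to each interior marked point, two $\Delta$-arcs cutting off a once-punctured monogon around each puncture, with a figure), and it never invokes maximal arc systems. You instead obtain existence by extending $\asA$ to a maximal system and classifying the complementary pieces of maximal systems --- which is exactly the paper's later \cref{prop:characterization:maximal:arc:systems} --- and then recover the number $3m+2n$ by an Euler characteristic count inside $T$, in the spirit of the paper's later \cref{lem:dimension:maximal:arc:system}. Your route buys a stronger conclusion: the count is forced, i.e.\ \emph{any} refinement of $\asA$ into triangles and once-punctured monogons adds exactly $3m+2n$ arcs, not just the particular one the paper draws; and your counting step is airtight ($V=m+3$, $F=t+n$, $V-E+F=1$ and $3t+n=2E-3$ do give $E=3m+2n+3$, hence $3m+2n$ new arcs). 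The costs are twofold. First, your proof leans on the two hardest ingredients of the section, which the paper proves \emph{after} this lemma; this is not circular (neither \cref{prop:characterization:maximal:arc:systems} nor the existence of maximal extensions uses the present lemma), but you should make that dependence explicit, and the existence of maximal extensions itself requires an a priori bound on the rank of arc systems, which you assert rather than prove. Second, your classification sketch needs repair in two places: the equivalence ``$\chi(\mathrm{double})=-1$ if and only if $B$ is a triangle or a once-punctured monogon'' is false as stated --- an annulus with a single marked point on one boundary circle also has doubled Euler characteristic $-1$ --- and becomes true only after the observation that every boundary circle of a piece must carry a point of $\Delta$ (its boundary is made of $\Delta$-arcs, whose endpoints lie in $\Delta$); relatedly, your list of ``larger pieces'' omits pieces with several boundary components, which the paper's argument treats separately. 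These are fixable, and the step you flag as the main obstacle is indeed the real content of your approach.
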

\begin{proof}
    For $m=1$ and $n=0$, $\asB$ is formed by adding three $\Delta$-arcs, splitting $T$ into three triangles.
    For $m=0$ and $n=1$, $\asB$ is constructed by adding two $\Delta$-arcs and the pieces of the splitting of $T$ along $\asB$ are two triangles and a one-punctured monogon. 
    In the general case, $\asB$ is formed by adding three $\Delta$-arcs for each marked point and two $\Delta$-arcs for each puncture, resulting in $3m+2n$ $\Delta$-arcs, see Figure \ref{fig:triangle:arc:system}.
    \begin{figure}[h]
        \centering
        \includegraphics[width=0.5\textwidth]{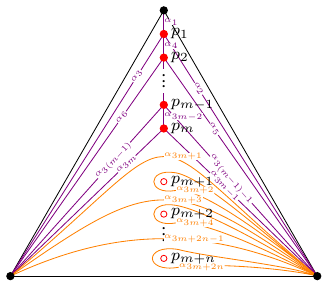}
        \caption{\small Maximal arc system in a triangle.}
        \label{fig:triangle:arc:system}
    \end{figure}
\end{proof}

\begin{lemma}\label{explicit:arc:system:maximal}
    There exists an arc system $\asA$ in $F_0$ of rank $6g-7+2s+m$ that splits $F_0$ into triangles and once-punctured monogons.
\end{lemma}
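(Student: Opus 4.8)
The plan is to build the desired arc system in two stages: first cut $F_0$ into pieces that are all honest triangles except for one distinguished triangle $T$ whose interior contains all of the remaining marked points $p_2,\dots,p_m$ together with all $s-m$ punctures, and then invoke \cref{lem:arc:system:triangle} to subdivide $T$ into triangles and once-punctured monogons. For the coarse decomposition I would distinguish two cases. When $g\ge 1$ I start from the standard one-vertex ideal triangulation of the closed surface $F$ based at $p_1$ (obtained by triangulating the $4g$-gon fundamental polygon by diagonals), which uses $6g-3$ loops at $p_1$ and has $4g-2$ triangular faces; after an isotopy I may assume that $p_2,\dots,p_m$ and all punctures lie in the interior of a single one of these triangles $T$. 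When $g=0$ and $m\ge 3$ I instead start from the two-triangle decomposition of the sphere on the vertices $p_1,p_2,p_3$ (three arcs forming an ``equatorial'' triangle), and again push the remaining points into one of the two faces $T$.

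Granting the coarse decomposition, \cref{lem:arc:system:triangle} applied to $T$ --- which contains $m'$ marked points and $n'$ punctures --- adds exactly $3m'+2n'$ arcs and produces a splitting into triangles and once-punctured monogons. The rank is then a bookkeeping matter: for $g\ge 1$ we have $m'=m-1$, $n'=s-m$ and a base of $6g-3$ arcs, for a total of $(6g-3)+3(m-1)+2(s-m)=6g-6+2s+m$ arcs; for $g=0$, $m\ge3$ we have $m'=m-3$, $n'=s-m$ and a base of $3$ arcs, for the same total $3+3(m-3)+2(s-m)=6g-6+2s+m$. Since an arc system of rank $k$ has $k+1$ arcs, this gives rank $6g-7+2s+m$, as claimed. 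As an independent check, and to see that the count is forced rather than accidental, I would run the Euler-characteristic computation on \emph{any} splitting into $t$ triangles and $p$ once-punctured monogons: each monogon carries exactly one puncture so $p=s-m$, each arc is two-sided so $3t+p=2A$, and filling in the punctures and adding one auxiliary edge per monogon realizes $F$ as a CW-complex with $s$ vertices, $A+p$ edges and $t+p$ faces, whence $s-A+t=2-2g$; solving gives $A=6g-6+2s+m$ regardless of the chosen decomposition.

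The delicate point is the base decomposition in the two remaining low-complexity cases $g=0$, $m\in\{1,2\}$, where the sphere cannot be triangulated using only $m<3$ vertices and so no triangle base exists. These I would treat by a direct construction (drawing, for each puncture, a disjoint ``lasso'' loop based at $p_1$ that bounds a once-punctured monogon, and completing the complement by hand), or, more uniformly, by the following observation which in fact dispenses with the case analysis altogether: a \emph{maximal} arc system --- one to which no further $\Delta$-arc can be added --- must cut $F_0$ into triangles and once-punctured monogons, since every other terminal piece is excluded. Indeed, a complementary piece of positive genus, with more than one puncture, with more than one boundary circle, or an unpunctured (resp.\ once-punctured) polygon with more than three (resp.\ one) sides all admit a further essential, non-boundary-parallel $\Delta$-arc, while unpunctured monogons and bigons are ruled out by condition (2); and a maximal arc system exists because disjoint, pairwise non-isotopic arcs on a fixed surface are bounded in number. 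Throughout one must also confirm that the arcs introduced are essential and pairwise non-isotopic and that the resulting pieces satisfy condition (2), which is immediate since both a triangle and a once-punctured monogon have a double (along $\partial B-\Delta$) of negative Euler characteristic. The main obstacle is thus organizing the low-genus sphere cases and checking essentiality of the added arcs; the arithmetic, by contrast, is forced by the Euler characteristic.
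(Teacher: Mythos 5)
Your construction in the generic cases is essentially the paper's. For $g\geq 1$ you use exactly the same decomposition: the one-vertex triangulation of the $4g$-gon based at $p_1$ (the $2g$ side loops plus $4g-3$ diagonals, $6g-3$ arcs in total), with all remaining marked points and punctures pushed into a single triangle, followed by \cref{lem:arc:system:triangle}, which contributes $3(m-1)+2(s-m)$ arcs; your arithmetic agrees with the paper's. For $g=0$ the paper instead builds lassos based at $p_1$ around punctures (treating $m=1$, $n=2$ and $m=1$, $n\geq 3$ separately, and saying ``similar procedures'' for $m\geq 2$), whereas you use an equatorial triangle on $p_1,p_2,p_3$ when $m\geq 3$; both work where they apply. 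The genuinely different ingredient in your write-up is the fallback argument: existence of a maximal arc system, the characterization of maximal systems (this is \cref{prop:characterization:maximal:arc:systems} in the paper), and an Euler-characteristic count showing that \emph{any} splitting into $t$ triangles and $p$ once-punctured monogons has exactly $6g-6+2s+m$ arcs (your identities $p=s-m$, $3t+p=2A$, $s-A+t=2-2g$ are correct). This route is valid and not circular, since the paper proves the characterization independently of this lemma; in effect you derive the lower bound from the same machinery the paper reserves for the upper bound (\cref{lem:dimension:maximal:arc:system}), at the cost of invoking the characterization proposition and the (asserted, not proved) finiteness of arc systems, where the paper keeps this lemma as a self-contained explicit construction.

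One concrete step of your explicit recipe fails as written: for $g=0$, $m=1$, $n=2$ (sphere with one marked point and two punctures), ``one lasso per puncture'' does not produce an arc system, because the two lassos based at $p_1$ encircling the two punctures are isotopic rel $\Delta$ --- equivalently, the piece between them is an unpunctured bigon, which violates condition (2). The correct system there is a \emph{single} arc, whose two complementary pieces are both once-punctured monogons (this is what the paper does, matching the count $6\cdot 0-6+3\cdot 1+2\cdot 2=1$). Likewise, for $m=2$ the lassos alone leave $p_2$ in the interior of the complementary polygon, and a piece with a marked point in its interior is neither a triangle nor a once-punctured monogon, so ``completing the complement by hand'' must add arcs through $p_2$, not just polygon diagonals. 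These defects are fixable, and your maximality fallback does cover them; just note that the fallback is then carrying real weight exactly in the cases where the explicit construction is shakiest.
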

\begin{proof}
    For simplicity, let $n=s-m$ denote the number of punctures of $F_0$. We will show that there is an arc system which has $6g-6+3m+2n$ $\Delta$-arcs that splits $F_0$ into triangles and once-punctured monogons. The proof is divided into cases. 
        
    \textbf{Case 1: $g=0$.}

    For $m=1$ and $n=2$, $\asA$ consists of a single $\Delta$-arc surrounding one puncture, splitting $F_0$ into two once-punctured monogons. Note that $6(0)-6+3(1)+2 = 1$, as expected. 
    For $m=1$ and $n\geqslant 3$, we take three $\Delta$-arcs based on the marked point and each of these surrounding a single puncture. By splitting $F_0$ along these three $\Delta$-arcs, we obtain a triangle with $n-3$ punctures and $3$ once-punctured monogons. By \cref{lem:arc:system:triangle}, we add $2(n-3)$ $\Delta$-arcs in the triangle with $n-3$ punctures, resulting in the desired arc system $\asA$, with $3+2(n-3) = 6(0)-6+3(1)+2n$ $\Delta$-arcs.
    Similar procedures apply to cases where $m\geqslant 2$, by selecting $\Delta$-arcs such that one of the pieces resulting from the splitting $F_0$ along these $\Delta$-arcs is a triangle which contain the remaining points of $\Delta$ and the punctures. Then by \cref{lem:arc:system:triangle}, we add the remaining $\Delta$-arcs. 

    \textbf{Case 2: $g\geqslant 1$.}

    Consider a $\Delta$-arc system with $2g$ arcs, all based in a single point of $\Delta$, such that the system splits $F$ in a $4g$-polygon.  Consider first the arc system $\asA'$ 
    constructed using $2g$ $\Delta$-arcs on the sides of the polygon, and  $4g-3$ $\Delta$-arcs that connect a vertex of the polygon with the remaining ones. 
    These $6g-3$ $\Delta$-arcs are selected such that the splitting of $F_0$ along them, only one piece contains the $n$ punctures and the remaining $m-1$ points of $\Delta$ in its interior (see Figure \ref{fig:arc:system:A}). 
    Finally, by \cref{lem:arc:system:triangle} we add $3(m-1)+2n$ $\Delta$-arcs to the arc system $\asA'$ in the special triangle, obtaining the desired arc system $\asA$, with $6g-3+3(m-1)+2n = 6g-6+3m+2n$ $\Delta$-arcs (see Figure \ref{fig:arc:system:B}). 
    
\begin{figure}[h!]
    \centering
    \begin{subfigure}[b]{0.45\textwidth}
        \centering
        \includegraphics[width=\textwidth]{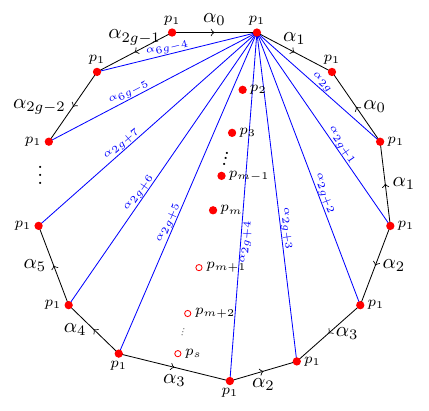}
        \caption{}
       \label{fig:arc:system:A}
    \end{subfigure}
    \hfill
    \begin{subfigure}[b]{0.45\textwidth}
        \centering
        \includegraphics[width=\textwidth]{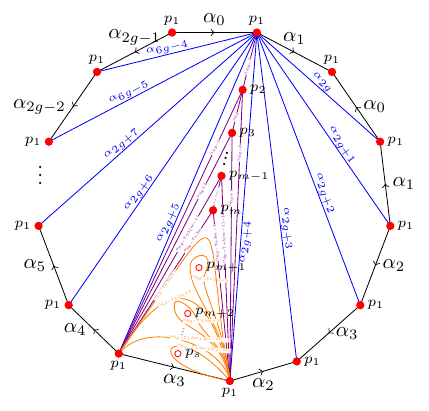}
        \caption{}
        \label{fig:arc:system:B}
    \end{subfigure}
    \caption{\small Arc systems  $\asA'$ and $\asA$ in a surface $F_0$}
    \label{fig:arc:system}
\end{figure}
\end{proof}

\begin{proposition}[Characterization of maximal arc systems]\label{prop:characterization:maximal:arc:systems}
    Let $\asA$ be an arc system in $F_0$. The arc system $ \asA$ is maximal if and only if the pieces of the splitting of $F_0$ along $\asA$ are triangles or once-punctured monogons. 
\end{proposition}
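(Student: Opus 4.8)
The plan is to reformulate condition (2) through Euler characteristics and then prove the two implications separately, the forward one resting on a short case analysis. First I would record that, writing $E(B)$ for the number of boundary edges of a piece $B$ in the induced CW-structure, the double $D(B)$ of $B$ along $\partial B-\Delta$ satisfies $\chi(D(B))=2\chi(B)-E(B)$, where $\chi(B)=2-2h-b-n$ with $h$, $b$, $n$ the genus, the number of boundary circles and the number of interior punctures of $B$. Thus condition (2) reads exactly $\chi(D(B))<0$. A direct computation shows that a triangle and a once-punctured monogon both realize $\chi(D(B))=-1$, while the only pieces that can arise with $\chi(D(B))\geq 0$ — the unpunctured monogon ($E=1$) and the unpunctured bigon ($E=2$) — are precisely those excluded by (2). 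I would also record the bookkeeping identity that cutting $B$ along a new $\Delta$-arc $\beta$ whose endpoints are already vertices (so no edge is subdivided) leaves the double characteristic unchanged: if $\beta$ separates $B$ into $B_1,B_2$ then $\chi(D(B_1))+\chi(D(B_2))=\chi(D(B))$, and if $\beta$ is non-separating with cut-open piece $B'$ then $\chi(D(B'))=\chi(D(B))$.

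For the implication that triangles and once-punctured monogons force maximality, suppose every piece is of one of these two types and that some $\Delta$-arc $\beta$ could be added. Since $\beta$ meets the arcs of $\asA$ only in $\Delta$, its interior lies in a single piece $B$. If $B$ is a triangle, the endpoints of $\beta$ lie among its three corners, and up to isotopy rel $\Delta$ the arc $\beta$ is either parallel to a boundary edge or cuts off an unpunctured monogon or bigon; the first is excluded by the remark following condition (2), and in the remaining cases the new piece has $\chi(D(\cdot))\geq 0$, violating (2). If $B$ is a once-punctured monogon, both endpoints of $\beta$ are its unique vertex, and $\beta$ is either null-homotopic rel $\Delta$, parallel to the boundary edge, or separates the puncture and again cuts off an unpunctured monogon or bigon; all possibilities are excluded. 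Hence no arc can be added and $\asA$ is maximal.

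For the converse I would argue by contrapositive, exhibiting an arc to add whenever a piece $B$ is neither a triangle nor a once-punctured monogon, organizing the argument by the topology of $B$. If $B$ carries an interior marked point of $\Delta$, I join it to a boundary vertex (one exists because $\partial B\neq\emptyset$) by a spoke; this arc is essential and, being non-separating, preserves $\chi(D(\cdot))<0$. Assuming no interior marked points, if $B$ is not a disk — positive genus or at least two boundary circles — then it admits a non-separating arc, or an arc joining two boundary circles, based at vertices; such an arc is never boundary-parallel and the cut preserves $\chi(D(\cdot))<0$. If $B$ is a disk, then $\chi(D(B))=2-2n-E(B)<0$: when $n=0$ this forces $E(B)\geq 3$, so a diagonal splits any polygon with $E(B)\geq 4$ into two polygons with at least three edges (the case $E(B)=3$ being the excluded triangle); when $n\geq 1$ and $B$ is not the once-punctured monogon, a loop based at a boundary vertex and encircling a single puncture peels off a once-punctured monogon and leaves a valid punctured piece, exactly as in the construction of \cref{lem:arc:system:triangle}. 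In every case $\asA$ is enlarged, contradicting maximality.

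The main obstacle is this forward direction: one must ensure that in each geometric case the chosen arc is a genuine, non-peripheral $\Delta$-arc and that every resulting piece still satisfies (2). The Euler-characteristic identities above make the bookkeeping automatic once the arc is produced, reducing the problem to the mere existence of a suitable essential arc — guaranteed for non-disks by standard surface topology and for punctured disks by the explicit peeling of \cref{lem:arc:system:triangle}. The only remaining content is then the finite check, carried out above, that the disks with $\chi(D(B))=-1$ and no interior marked points are exactly the triangle ($n=0$, $E=3$) and the once-punctured monogon ($n=1$, $E=1$).
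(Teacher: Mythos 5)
Your proof is correct, and its logical skeleton is the same as the paper's: in both, one direction exhibits an additional essential arc inside any piece that is not a triangle or once-punctured monogon, and the other shows that no arc can be added inside a triangle or once-punctured monogon. What differs is the verification machinery. The paper parametrizes a piece $B$ by $(h,b,n,\ell)$ (genus, boundary circles, punctures, boundary vertices) and disposes of each threshold case ($h\geq 1$, $b\geq 2$, $n\geq 2$, $\ell\geq 4$, then the small residual cases) by drawing the new arcs in a figure, the admissibility of the enlarged system being read off from the picture; the excluded small cases ($n=0$, $\ell\leq 2$) are ruled out by the remark following condition~(2). You instead recast condition~(2) as $\chi(D(B))<0$ via $\chi(D(B))=2\chi(B)-E(B)$, identify the forbidden pieces as exactly the unpunctured monogons and bigons, and prove additivity of $\chi(D(\cdot))$ under cutting; this makes every admissibility check a one-line computation (automatic for non-separating arcs, a finite check for separating ones such as your diagonal and peeling moves, where one must still rule out cutting off a monogon or bigon, as you do). Your approach buys two things: independence from figures, and an explicit treatment of pieces containing unused points of $\Delta$ in their interior (your spoke case), a configuration that the paper's $(h,b,n,\ell)$ bookkeeping does not explicitly isolate, since an interior marked point must either be counted among the punctures $n$ (making the final phrase ``once-punctured monogon'' ambiguous) or be silently assumed absent. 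The paper's argument is shorter and more visual; yours is more self-contained and closes that small gap.
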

\begin{proof}
    Suppose that $\asA$ is maximal, and let $B$ be a piece of the splitting of $F_0$ along $\asA$. We have that $B\cong F_{h,b}^{n}$, where $h$ is the genus of the surface, $b$ is the number of boundary components, and $n$ is the number of punctures. Let $\ell$ be the number of marked points on the boundary of $B$. 
    Note that $b, \ell \geqslant 1$, and there is at least one marked point in each boundary component. We will prove that $h=0, b=1$ (a disk), and either $n=1$ and $\ell=1$ (a once-punctured monogon), or $n=0$  and $\ell=3$ (a triangle). 
    If $h\geqslant 1$, $b \geqslant 2$, $n\geqslant 2$, or $\ell\geqslant 4$, we can add $\Delta$-arcs as shown in \cref{fig:EXTa} to extend the arc system $\asA$ into a new one, which contradicts the maximality of $\asA$.
    Thus, $h=0, b=1, n \leqslant 1$, and $\ell \leqslant 3$. We exclude the case $n=1$ and $\ell=2,3$ since we can add new $\Delta$-arcs to $\asA$ and obtain a new arc system, contradicting the maximality of $\asA$ (see \cref{fig:EXTb}). Additionally, we exclude when $n=0$ and $\ell=1,2$, since the boundary of the piece $B$ represents $\Delta$-arcs of $\asA$, and in these cases, the piece $B$ gives us two arcs that are isotopic relative to $\Delta$ (when $\ell=2$) or a $\Delta$-arc which is isotopic to the marked point (when $\ell=1$). Therefore $B$ is either a triangle ($h=0, b=1, n=0$, and $\ell=3$) or a once-punctured monogon ($h=0, b=1, n=1$, and $\ell=1$).
    \begin{figure}[h]
    \centering
    \begin{subfigure}{1\textwidth}
        \centering
        \includegraphics[width=0.23\linewidth]{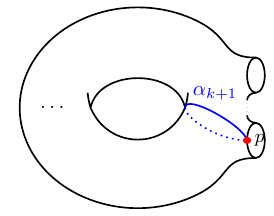}
        \includegraphics[width=0.23\linewidth]{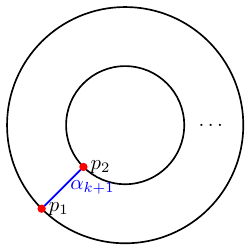}
        \includegraphics[width=0.23\linewidth]{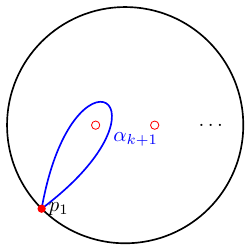}
        \includegraphics[width=0.23\linewidth]{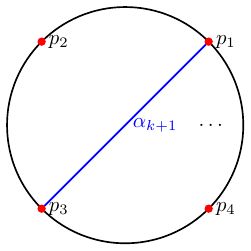}
        \caption{}
        \label{fig:EXTa}
    \end{subfigure}
    \hspace{1cm} 
    \begin{subfigure}{1\textwidth}
        \centering        \includegraphics[width=0.23\linewidth]{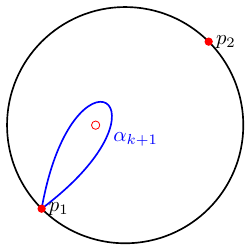}
        \hspace{0.1\textwidth}
        \includegraphics[width=0.23\linewidth]{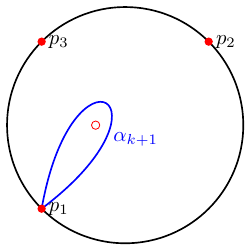}
        \caption{}
        \label{fig:EXTb}
    \end{subfigure}

    \caption{\small $\Delta$-arcs for extending the arc system}
 \label{fig:extending_arc_systems}
\end{figure}

    Now, consider an arc system $\asA$ that splits $F_0$ into triangles or once-punctured monogons. Suppose $\asA$ is not maximal. Thus, there exists an arc system $\asA'$ such that $\asA \subsetneq \asA'$. Let $\alpha \in \asA' - \asA$. Then, $\alpha$ lies in a piece $B$ of the splitting of $F_0$ along $\asA$, which is either a triangle or a once-punctured monogon.
    Then, $\alpha$ is isotopic to a $\Delta$-arc of $\asA$ or a marked point. However, this contradicts the assumption that $\asA'$ forms an arc system. Therefore, $\asA$ is maximal.
\end{proof}


\begin{lemma}\label{lem:dimension:maximal:arc:system}
Let $\asA$ be a maximal arc system in $F_0$. Then the $\rank$ of $\asA$ is $6g-7+2s+m$.
\end{lemma}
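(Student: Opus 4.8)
The plan is to combine the geometric characterization of maximal arc systems with a double count of the cells they determine. By \cref{prop:characterization:maximal:arc:systems}, if $\asA$ is maximal then the pieces of the splitting of $F_0$ along $\asA$ are triangles and once-punctured monogons; write $t$ for the number of triangles, $q$ for the number of once-punctured monogons, and $a$ for the number of $\Delta$-arcs in $\asA$. Since an arc system of rank $k$ consists of $k+1$ arcs, it suffices to show $a=6g-6+2s+m$, so that $\rank(\asA)=a-1=6g-7+2s+m$. Throughout I set $n=s-m$ for the number of punctures of $F_0$.

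The first relation comes from the punctures: each triangle contains no puncture and each once-punctured monogon contains exactly one, while every puncture of $F_0$ lies in a unique piece of the splitting; hence $q=n$. The second relation is an edge–face incidence count. As $F_0$ is orientable, every $\Delta$-arc is two-sided, so each arc appears exactly twice in total among the boundaries of the faces; since a triangle contributes three boundary arcs and a once-punctured monogon contributes one, this gives $3t+q=2a$.

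The third relation is an Euler characteristic computation, and this is the step that requires the most care, since a once-punctured monogon is not a disk in $F_0$. I would promote the decomposition to an honest CW-structure on the \emph{closed} surface $F$ by adding the $n$ punctures as $0$-cells and, inside each once-punctured monogon, a single edge (a ``spoke'') joining its puncture to the marked point on its boundary; this turns each monogon into one $2$-cell without changing the count of triangles. The resulting complex has $m+n$ vertices, $a+n$ edges and $t+q=t+n$ faces, so from $\chi(F)=2-2g$ one reads off
\[
(m+n)-(a+n)+(t+n)=2-2g,
\qquad\text{i.e.}\qquad
t=a-2g-n-m+2.
\]
(Equivalently, one may compute the compactly supported Euler characteristic of $F_0$ stratum by stratum, noting that an open once-punctured monogon contributes $0$ and recovering the same identity from $\chi(F_0)=2-2g-n$.)

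It then remains to solve the linear system. Substituting $q=n$ and the expression for $t$ into $3t+q=2a$ gives $3(a-2g-n-m+2)+n=2a$, whence $a=6g+2n+3m-6$. Replacing $n=s-m$ yields $a=6g+2s+m-6$, so $\rank(\asA)=a-1=6g-7+2s+m$, as claimed. The only genuine subtlety is the bookkeeping of the punctures in the Euler characteristic; the possible degeneracies (arcs that are loops, or faces meeting themselves along an edge or at a vertex) affect neither the incidence count nor the cell counts and so may be ignored.
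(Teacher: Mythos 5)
Your proof is correct. The paper obtains the same count by a two-case argument: when $m=s$ it runs essentially your computation (Euler characteristic of the CW-structure induced on $F$ together with the incidence relation $E=\tfrac{3T}{2}$), but when $m<s$ it instead reduces to the closed case by capping the punctures and arguing that exactly $2(s-m)$ arcs must be removed to obtain an arc system on $F$; that reduction rests on the auxiliary observation that two once-punctured monogons never share a side, with the sphere with two punctures and one marked point treated as a separate special case. Your argument replaces this reduction by a single uniform linear system: $q=n$ from the distribution of punctures among the pieces, the incidence count $3t+q=2a$, and the Euler characteristic relation coming from the spoke-augmented CW-structure on $F$ (or, equivalently, from the compactly supported Euler characteristic of $F_0$). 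What this buys is robustness: no case division is needed, and the degenerate configuration in which two monogons share their unique side (the twice-punctured sphere with one marked point) is absorbed automatically, because the count ``each arc contributes two sides'' is insensitive to whether those two sides lie on distinct pieces or on the same piece. One small point you should make explicit: the vertex count $m+n$ uses that every point of $\Delta$ is an endpoint of some arc of $\asA$; this does follow from \cref{prop:characterization:maximal:arc:systems}, since triangles and once-punctured monogons contain no marked points in their interiors, but it deserves a sentence.
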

\begin{proof}

 Let $\asA$ be a maximal arc system in $F_0$. We have two cases $m=s$ or $m\neq s$.
In the first case, we have by \cref{prop:characterization:maximal:arc:systems} and \cref{arc:system:deter:CW:structure} that $\asA$ induces a CW-structure in $F$. Denote by $V$ the number of vertices, $E$ the number of edges, and $T$ the number of triangles.  Then 
$\chi(F)=V-E+T=m-E+T$.
Note that each arc labels exactly two edges of the triangles, hence we have  $E=\frac{3T}{2}$. Thus $\chi(F)=2-2g=m-E+\frac{2E}{3}$,
it follows that $E=6g-6+3m$. Therefore $\asA$ has $\rank$ $6g-7+3m$. 

Now suppose that $s\neq m$.  Note that any two once-punctured monogons in the splitting of $F_0$ along $\asA$  cannot share a side unless the surface is homeomorphic to the sphere with two punctures and one marked point. If $F_0$ is the sphere with two punctures and one marked point the claim is clear, then assume that we are not in this case. Therefore any once-punctured monogon shares a side with a triangle.

Note that in the splitting of $F_0$, there are exactly $s-m$ pieces that are once-punctured monogons. Additionally the closed surface resulting from $F_0$ by capping the punctures is $F$. Hence, by the previous observation, when we cap the punctures in $F_0$  we need to remove exactly 
$2(s-m)$ arcs to obtain an arc systems $\asA'$ in $F$. By the closed case we have $\rank(\asA')=6g-7+3m$. It follows that $\rank(\asA)=6g-7+2s+m$. 
\end{proof}

\begin{proposition}\label{prop:A:contained:in:maximal}
Any arc system $\asA=\langle \alpha_0 ,\ldots , \alpha_k \rangle$ in $F_0$ is contained in a maximal arc system. In particular, an arc system of rank $6g-7+2s+m$ is maximal.
\end{proposition}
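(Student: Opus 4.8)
The plan is to establish the two assertions in turn, with the ``in particular'' clause following formally from the first statement together with \cref{lem:dimension:maximal:arc:system}. For the second assertion, suppose $\asA$ has rank $6g-7+2s+m$; by the first assertion it is contained in some maximal arc system $\asA'$, and by \cref{lem:dimension:maximal:arc:system} the system $\asA'$ also has rank $6g-7+2s+m$. Since $\asA\subseteq\asA'$ and both consist of exactly $6g-6+2s+m$ arcs, we must have $\asA=\asA'$, so $\asA$ is itself maximal. This step is immediate, so the real content is the first assertion.

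To show that $\asA$ embeds in a maximal arc system, I would not appeal to an abstract extension-until-stuck process but instead subdivide the complement directly. Let $B\cong F_{h,b}^{n}$ be a piece of the splitting of $F_0$ along $\asA$, carrying $\ell\geq 1$ marked points on its boundary. The goal is to produce, inside each such $B$, a finite family of $\Delta$-arcs cutting $B$ into triangles and once-punctured monogons; adjoining all of these to $\asA$ yields a system $\asA'$ with $\asA\subseteq\asA'$ whose pieces are all triangles or once-punctured monogons, which is maximal by \cref{prop:characterization:maximal:arc:systems}. The subdivision of a single piece $B$ is carried out exactly as in the proof of \cref{explicit:arc:system:maximal}: first add $\Delta$-arcs (the moves of \cref{fig:extending_arc_systems}) to cut the handles and to connect the boundary components, reducing $B$ to a polygon whose interior contains the remaining marked points and punctures, and then invoke \cref{lem:arc:system:triangle} to split this polygon into triangles and once-punctured monogons.

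The main obstacle is the subdivision step for a general piece $B$, because \cref{explicit:arc:system:maximal} builds its arc system around a single base point, whereas an arbitrary $B$ may have its $\ell$ marked points distributed over several boundary components and may have positive genus. One must therefore check that every arc added in the reduction is a genuine $\Delta$-arc, i.e.\ has both endpoints among the marked points of $\partial B$ (the extreme case $\ell=1$, where all new arcs are based loops, is already the one handled by \cref{lem:arc:system:triangle}), and that the reductions can be organized so as to strictly simplify $B$ at each stage and hence terminate in finitely many steps. The remaining verification, that the final pieces satisfy the negativity condition (2) in the definition of an arc system, is automatic since triangles and once-punctured monogons have negative doubled Euler characteristic; it is the uniform bookkeeping of the reduction over all quadruples $(h,b,n,\ell)$ that requires the most care.
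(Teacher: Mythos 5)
Your handling of the ``in particular'' clause is fine and matches the intended argument. The gap is in the main assertion: everything hinges on the claim that an arbitrary piece $B\cong F_{h,b}^{n}$ of the splitting, carrying $\ell$ marked points distributed over its boundary components, can be cut by finitely many $\Delta$-arcs into triangles and once-punctured monogons---and this claim is never proved. You first say the subdivision is carried out ``exactly as in the proof of \cref{explicit:arc:system:maximal}'', but that lemma does not apply: it constructs one specific arc system on $F_0$ itself, starting from a $4g$-gon built around a single base point, whereas your $B$ may have positive genus, several boundary components, and marked points spread over all of them; \cref{lem:arc:system:triangle} likewise only treats a triangle with marked points and punctures in its interior (so, contrary to your parenthetical, it does not cover the $\ell=1$ case either). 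You then acknowledge this yourself, listing what ``one must check'' (that every added arc is a genuine $\Delta$-arc, that the reduction strictly simplifies $B$ and terminates) and conceding that the bookkeeping over all quadruples $(h,b,n,\ell)$ ``requires the most care''---but none of these checks is performed. Since this subdivision is the entire content of the proposition, the proposal as written is a plan whose central step is missing.

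The paper closes exactly this hole by a cheaper mechanism, the one you explicitly set aside as an ``abstract extension-until-stuck process''. If $\asA$ is not maximal, then by \cref{prop:characterization:maximal:arc:systems} some piece is neither a triangle nor a once-punctured monogon, and the case analysis in the proof of that proposition (the moves of \cref{fig:extending_arc_systems}) already exhibits a single $\Delta$-arc that can be added; iterating, the process stops after finitely many steps because the rank of an arc system cannot grow beyond the value computed in \cref{lem:dimension:maximal:arc:system}, and when it stops the system is maximal. So one never needs a uniform recipe subdividing a whole piece at once---only the one-arc extension, which is already proved, plus a rank bound. Your route can be repaired either by adopting this iteration, or by genuinely carrying out the subdivision of $B$ (for instance by induction on $2h+b+n+\ell$), but as it stands the proof is incomplete precisely where the work lies.
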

\begin{proof}
Suppose that $\asA$ is not maximal. By Proposition \ref{prop:characterization:maximal:arc:systems}, the splitting of $F_0$ along the arc system $\asA$ yields at least one piece that is neither a triangle nor a once-punctured monogon.
As in the proof of \cref{prop:characterization:maximal:arc:systems}, we can add a $\Delta$-arc $\alpha_{k+1}$ as described in \cref{fig:extending_arc_systems} extending the arc system $\asA$ into a new system $\asA'$. By \cref{lem:dimension:maximal:arc:system} we add a finite number of $\Delta$-arcs to obtain a maximal arc system which contains $\asA$. 
\end{proof}

\section{Harer's spine and its dimension}\label{sec:spine}

In this section we recall the definition of Harer's spine $\mathcal{Y}$ from \cite[Section 2]{Ha86} and compute its dimension proving \cref{thm:main}. Let \(1\leq  m \leq s \),   \( \Delta = \{p_1, \ldots, p_m\} \), \( P = \{p_{m+1}, \ldots, p_s\} \) and \( F_0 = F - P \) as in \cref{sec:arcs}. 

The {\it decorated Teichm\"uller space} $\Tgs(\Delta)$ is the space of all pairs $(R,\lambda)$ where $R$ is a point of $\Tgs$ and $\lambda$ is a projective class of a collection of positive weights on the $m$ points of $\Delta$. Topologically $\Tgs(\Delta)$ is homeomorphic to the product of $\Tgs$ and a open simplex of dimension $m-1$. There is a natural diagonal action of $\pmodgs$ on $\Tgs(\Delta)$ when $m\leq s$, and only when $m=s$ it actually admits an action of $\modgs$.

 We say an arc system $\asA$ \emph{fills up} $F_0$  provided every point in $\Delta$ is the initial or final point of an arc in $\asA$, and each connected  component $B$ of the splitting of $F_0$ along $\asA$ is either homeomorphic to a disk or a once-puntured disk.  As a direct consequence of \cref{prop:characterization:maximal:arc:systems} every maximal arc system fills up $F_0$. Note that when $P$ is empty, that is we have no punctures, $\asA$ determines a CW-structure on $F$ such that $\Delta$ is the 0-skeleton and the union of the arcs in $\asA$ form the 1-skeleton.

  Let $\calA_\infty(\Delta)$ be the codimension $2$ subcomplex of $\calA(\Delta)$ formed by all the arc systems that do not fill up $F_0$.  From the definition we can see that  $\PMod_g^s$ acts simplicially on $\calA(\Delta)$ and preserves the subcomplex $\calA_\infty(\Delta)$. Notice that  it admits an action of $\Mod_g^s$ only when $s=m$. The following result  states that $\mathcal{A}(\Delta)-\mathcal{A}_{\infty}(\Delta)$ defines an ideal triangulation of the decorated Teichm\"uller space $\Tgs(\Delta)$.

\begin{theorem}\cite[Theorem 1.3]{Ha86}\label{Theo:HomeoTeich} There is a $\pmodgs$-equivariant homeomorphism $$\phi:\Tgs(\Delta)\longrightarrow (\mathcal{A}(\Delta)-\mathcal{A}_{\infty}(\Delta)).$$ 
\end{theorem}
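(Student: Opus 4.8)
The plan is to follow Penner's convex hull construction, which assigns to each decorated hyperbolic structure a canonical weighted ideal cell decomposition and identifies this assignment with a point of $\calA(\Delta)-\calA_\infty(\Delta)$. First I would realize a point $(R,\lambda)\in\Tgs(\Delta)$ concretely as a complete finite-area hyperbolic surface homeomorphic to $F-\{p_1,\ldots,p_s\}$ together with a choice of horocycle at each decorated cusp $p_1,\ldots,p_m$, the projective class $\lambda$ recording the horocycle sizes up to simultaneous scaling. Passing to the universal cover $\mathbb{H}^2$ with Fuchsian group $\Gamma$ and using the hyperboloid model inside Minkowski $3$-space, each decorated cusp lifts to a $\Gamma$-orbit of points on the positive light cone, with the Minkowski norm of each point encoding the size of the corresponding horocycle. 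I would then take the closed convex hull $\calH$ of this $\Gamma$-invariant set of light-cone points; radial projection of $\partial\calH$ to the hyperboloid, pushed down to $F_0$, yields a $\Gamma$-invariant ideal cell decomposition whose $1$-skeleton is a $\Delta$-arc system filling up $F_0$, with each undecorated puncture of $P$ sitting inside a once-punctured disk cell.

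Next I would attach to each arc $\alpha$ of this decomposition Penner's \emph{simplicial coordinate}, a nonnegative weight measuring the bending of $\partial\calH$ across the associated face (equivalently a horocyclic-length quantity). An arc carries strictly positive weight exactly when it is a genuine edge of the decomposition; degenerate configurations, in which four light-cone points lie on a common face of $\partial\calH$ so that the face is a quadrilateral rather than a triangle, make a weight vanish and correspond to passing to a lower-dimensional face of $\calA(\Delta)$. Normalizing the weights to sum to $1$ — which is precisely the effect of projectivizing $\lambda$ — sends $(R,\lambda)$ to a point of the open simplex of $\calA(\Delta)$ spanned by the arcs of the decomposition. Since the system fills up $F_0$, this point lies in $\calA(\Delta)-\calA_\infty(\Delta)$, and this defines $\phi$.

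For bijectivity I would use Penner's $\lambda$-length coordinates as an explicit inverse: a weighted filling arc system reconstructs an honest decorated hyperbolic structure by gluing ideal triangles with the prescribed combinatorics and metric data, so $\phi$ is a bijection onto $\calA(\Delta)-\calA_\infty(\Delta)$. Equivariance under $\pmodgs$ is then immediate, since the convex-hull decomposition and the simplicial coordinates are canonical and use no auxiliary choices; the mapping class group acts only by changing the marking, which the whole construction respects. This is also consistent with the earlier observation that, because the decorations live on $\Delta$, only the case $m=s$ upgrades to a $\modgs$-action.

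The hard part will be the continuity of $\phi$ and $\phi^{-1}$ across the walls separating top-dimensional cells. As the decorated structure varies and a codimension-one face of $\partial\calH$ becomes non-simplicial (a \emph{flip}), the combinatorial type of the decomposition jumps, and one must verify that the two weighted arc systems on either side of the wall agree as points of $\calA(\Delta)$: the simplicial coordinates of the arcs that disappear must tend to $0$, and the face relations of $\calA(\Delta)$ must exactly encode these degenerations. Establishing this matching, together with the properness needed to guarantee that no sequence escapes into $\calA_\infty(\Delta)$, is where the genuine analytic and combinatorial work lies. Once the local homeomorphism property is checked on each cell and these local maps are shown to agree continuously along shared faces, the pieces glue to the asserted global $\pmodgs$-equivariant homeomorphism.
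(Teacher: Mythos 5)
You should first note that the paper contains no proof of this statement: it is quoted from Harer \cite[Theorem 1.3]{Ha86}, and the surrounding text only records attributions --- Harer's own proof is in the conformal category and is due to Mumford, the construction is originally Thurston's, and hyperbolic-geometric proofs were later given by Bowditch--Epstein \cite{BE88} and Penner \cite{Penner87}. Your proposal is therefore not a reconstruction of the cited argument but a faithful outline of the alternative, hyperbolic route, essentially Penner's convex-hull construction. The two approaches differ genuinely. Mumford/Harer start from the conformal structure $R$ and the weights $\lambda$ on $\Delta$ and invoke Strebel's existence-and-uniqueness theorem for Jenkins--Strebel quadratic differentials: the closed horizontal trajectories sweep out cylinders around the points of $\Delta$ with data prescribed by $\lambda$, the critical graph is a spine of the surface, the arcs dual to its edges form a filling $\Delta$-arc system, and the normalized widths of the dual bands give the barycentric coordinates; uniqueness yields injectivity and continuous dependence on $(R,\lambda)$ yields the homeomorphism. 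This buys a uniform treatment of the undecorated punctures of $P$ (they are just additional punctures for the differential) at the cost of a hard analytic black box. Your route buys explicit coordinates and an explicit inverse via $\lambda$-lengths, with the delicate points being exactly the ones you identify: matching of simplicial coordinates across flips, and properness so that no point is sent into $\calA_\infty(\Delta)$.

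One caveat you should make explicit: Penner's paper \cite{Penner87} (and the Epstein--Penner convex-hull construction it relies on) treats the fully decorated case $m=s$, where every cusp carries a horocycle and hence contributes a light-cone orbit. In the partially decorated case $m<s$ --- the case this paper actually cares about --- the cusps in $P$ contribute no points to the set whose convex hull $\calH$ you take, and you must prove that over each such cusp the boundary $\partial\calH$ has a face which is an infinite planar polygon invariant under the corresponding parabolic subgroup (a supporting plane with null normal vector), whose quotient is precisely the once-punctured polygon cell you assert. This is true, but it is a genuine extension of \cite{Penner87} rather than a quotation of it, so your sketch needs either this additional argument or a reference to the literature on partially decorated surfaces; otherwise the step ``each undecorated puncture sits inside a once-punctured disk cell'' is an unproven assertion at exactly the point where the statement being proved differs from the one Penner established.
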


Harer's proof of this theorem in \cite{Ha86} is in the conformal category and is due to Mumford. In \cite[Chapter 2]{HarerSurvey} Harer attributes to W. Thurston the original construction of this ideal triangulation. A proof using hyperbolic geometry was provided by B.H. Bowditch and D.B. A. Epstein \cite{BE88}. See also the work of R. C. Penner \cite{Penner87} for an alternative proof. In the statement of \cref{Theo:HomeoTeich} we can replace $\pmodgs$ by $\modgs$ when $s=m$.
  
Let us denote by $\calA^0$ and by $\calA_{\infty}^0$ the barycentric subdivisions of the complexes $\calA(\Delta)$  and $\calA_\infty(\Delta)$, respectively. 

\begin{definition}[Harer's spine] Let $\calY(=\calY_g^{s,m})$ be the subcomplex of $\calA^0$ spanned by the arc systems that fill up $F_0$. Therefore, the vertices of $\calY$ consist of the set of arc systems that fill up $F_0$ and an $\ell$-simplex of $\calY$ is given by a chain of arc systems
\[\asA_0 \subset \cdots \subset \asA_\ell,\]
where all the inclusions are strict and all the $\asA_i$ fill up $F_0$.  We call {\it Harer's spine} both the complex $\calY$ and the subspace $\phi^{-1}(\calY)\subset \Tgs(\Delta)$.
\end{definition}
    
\begin{remark} We are mostly following the notation from \cite{Ha86}. However, instead of using $\mathcal{Y}$,  Harer uses the notation $Y^0$ for the spine, since (as he explains) it coincides with the first barycentric subdivision of a complex $Y$ given by the  dual of $\mathcal{A}$. 
\end{remark}

 Harer states the following result: 

\begin{theorem}\cite[Theorem 2.1]{Ha86}\label{Theo:DimSpine}
The complex $\mathcal{A}^0$ equivariantly deformation retracts onto the $4g - 5 + s + m$ dimensional complex $\mathcal{Y}$. This retraction provides a $\PMod_{g}^s$-equivariant homotopy equivalence between $\mathcal{A}^0 - \mathcal{A}_\infty^0$ and $\mathcal{Y}$. 
\end{theorem}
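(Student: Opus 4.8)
The plan is to establish the statement in three movements: construct an equivariant deformation retraction of $\calA^0$ onto $\calY$, identify the restricted homotopy equivalence on $\calA^0-\calA_\infty^0$, and then compute $\dim\calY$ combinatorially, arriving at $4g-5+s+m$. The first two are formal consequences of the poset structure of arc systems together with \cref{Theo:HomeoTeich}; the dimension count is where the substance lies.

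For the retraction I would use that the vertices of the barycentric subdivision $\calA^0$ are exactly the arc systems, ordered by inclusion, and that $\calY$ is the full subcomplex on those that fill up $F_0$, while $\calA_\infty^0$ is spanned by those that do not. I would build an equivariant discrete-Morse (poset) flow that collapses each non-filling system onto a filling one by adjoining arcs to any complementary piece that is neither a disk nor a once-punctured disk—that is, a piece of positive genus, with several boundary components, or with at least two punctures—via extension moves of the type in \cref{fig:extending_arc_systems}. Collapsing along this flow gives the deformation retraction of $\calA^0$ onto $\calY$; since the simplicial $\pmodgs$-action preserves the filling condition and commutes with the moves, it is $\pmodgs$-equivariant. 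Because the flow is transverse to $\calA_\infty^0$, never altering whether a system fails to fill, its restriction to $\calA^0-\calA_\infty^0$ stays inside that open subcomplex and retracts it onto $\calY\subset\calA^0-\calA_\infty^0$, yielding the asserted equivariant homotopy equivalence.

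The dimension computation is the heart of the matter. As $\calY$ is a full subcomplex of a barycentric subdivision, an $\ell$-simplex is a strict chain $\asA_0\subsetneq\cdots\subsetneq\asA_\ell$ of filling systems, so $\dim\calY$ is one less than the number of members of a longest such chain, namely $R-r_0$, where $R$ and $r_0$ are the maximal and minimal ranks of a filling system. By \cref{prop:characterization:maximal:arc:systems} and \cref{lem:dimension:maximal:arc:system} the maximal filling systems are precisely the maximal arc systems, so $R=6g-7+2s+m$. To pin down $r_0$ I would count arcs by Euler characteristic: if a filling system has $N$ arcs and its splitting of $F_0$ produces $D$ genuine disks together with the $s-m$ once-punctured disks, then capping the punctures turns each once-punctured disk into a disk and yields a CW structure on $F$ with $m$ vertices, $N$ edges and $D+(s-m)$ faces, whence $2-2g=m-N+(D+s-m)$ and so $N=D+2g+s-2$. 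Minimizing the rank is thus the same as minimizing $D$, and the target value $r_0=2g-2+s$ corresponds exactly to $D=1$; it then remains to check that a chain can be lengthened one arc at a time, so that the extreme ranks are joined by a chain realizing the full length $R-r_0$.

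The main obstacle is therefore the lower bound $D\ge 1$: one must show that every arc system filling $F_0$ leaves at least one complementary piece that is a genuine (unpunctured) disk, equivalently that $r_0=2g-2+s$. When $m=s$ this is immediate, for then $s-m=0$, every complementary piece is already a disk, and there is at least one piece, so $D\ge 1$. The delicate task is to secure the bound uniformly in $m$, by analyzing a putative minimal filling configuration and controlling how the once-punctured disks around the $s-m$ punctures can be arranged. I expect this to be the crux of the argument, since it is exactly this count that fixes the dimension. Granting $D\ge 1$, the computation closes as $\dim\calY=(6g-7+2s+m)-(2g-2+s)=4g-5+s+m$, which is the asserted value.
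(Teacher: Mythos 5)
Your reduction of the problem to computing extreme ranks of filling systems is structurally the same as the paper's proof of \cref{thm:main} (longest chain in $\calY$, maximal rank $R=6g-7+2s+m$ from \cref{lem:dimension:maximal:arc:system} and \cref{prop:characterization:maximal:arc:systems}, one-arc-at-a-time chains), and you correctly identified the crux: the minimal rank $r_0$ of a filling system, i.e.\ your lower bound $D\ge 1$. But that bound is \emph{false} whenever $m<s$, and this is exactly the point of the paper: the quoted statement is Harer's Theorem~2.1, which the paper proves is off by $1$ when $m<s$. If a filling system leaves a genuine (unpunctured) disk piece $B$, then since $P\neq\emptyset$ there is another complementary piece, and by connectedness of $F_0$ some edge of $B$ carries the same label as an edge of a different piece; deleting that arc merges the pieces and the result still fills $F_0$. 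Hence a \emph{minimal} filling system when $m<s$ has $D=0$: all complementary pieces are once-punctured disks. Your own Euler-characteristic identity $N=D+2g+s-2$ then gives $N=2g+s-2$ arcs, so $r_0=2g+s-3$, one less than your target value $2g+s-2$, and therefore $\dim\calY=(6g-7+2s+m)-(2g+s-3)=4g-4+s+m$, one \emph{more} than the claimed $4g-5+s+m$. A concrete counterexample to $D\ge1$ is the system $\asA_0$ of \cref{EXA} ($g\ge1$, $s=2$, $m=1$): the $2g$ standard loops based at $p_1$ cut $F_0$ into a single once-punctured $4g$-gon, which fills $F_0$ with no unpunctured disk piece, and extending it one arc at a time produces a $(4g-1)$-simplex, exceeding Harer's bound $4g-2$.

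So the gap is not a missing verification but a step that cannot be repaired: the statement you set out to prove is true only for $m=s$ (where your argument is immediate, since every piece is a disk and $D\ge1$ holds, recovering the paper's Case~2 with $r_0=2g+s-2$), and false for $m<s$. Two smaller remarks: your discrete-Morse sketch for the equivariant retraction is plausible but remains a sketch — the paper does not reprove this part either, it accepts Harer's retraction and corrects only the dimension; and note that collapsing non-filling systems \emph{outward} by adjoining arcs as you propose must be done with care, since adjoining arcs moves you up the poset, not down, so the flow you describe would need to be dual to the face relation rather than "transverse to $\calA_\infty^0$" as stated.
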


 Harer's computation of the dimension of $\calY$ is correct when $m=s$, however is off by $1$ when $m<s$ as the following example and our proof of \cref{thm:main} below show.

\begin{example}[Case $g\geq 1$, $s=2$, 
 $m=1$] \label{EXA} Let $F$ be a closed surface of genus $g\geq 1$ with $s=2$ marked points $\{p_1,p_2\}$ and we take $\Delta=\{p_1\}$ and $P=\{p_2\}$. From  \cite[Theorem 2.1]{Ha86} the dimension of Harer's spine $\mathcal{Y}$ is equal to \(4g -2=\vcd(\Mod_{g}^2)\).  However,  let us exhibit a $(4g-1)$-simplex of $\mathcal{Y}$.

 Consider a CW-structure of the surface $F$ given by the $0$-cell ${p_1}$ and $1$-cells $\alpha_0,\ldots,\alpha_{2g-1}$ that cut the surface into a $4g$-gon; see figure \ref{MAxMINa}. Notice that the arc system \(\asA_0=\langle \alpha_0,\ldots,\alpha_{2g-1}\rangle\) cuts the surface $F_0=F-\{p_2\}$ into a $4g$-gon with a puncture in $P$, hence it fills up $F_0$ and it gives a vertex of $\mathcal{Y}$ of minimal weight$=2g-1$.    By adding the $4g-3$ orange arcs $\alpha_{2g},\ldots \alpha_{6g-4}$ and the $2$ violet arcs $\alpha_{6g-3}$ and $\alpha_{6g-2}$ from figure \ref{MAxMINb}, we obtain an  arc system $\asA_{4g-1}$ of rank $6g-2$ which fills up  the surface $F_0$ and gives a vertex of  $\mathcal{Y}$ of maximal weight$=6g-2$.  Hence we obtain the chain 
 $$ \asA_0 \subset \asA_1  \subset \cdots \subset \asA_{4g-1},$$
 by adding an arc at a time, which defines a $(4g-1)$-simplex of $\mathcal{Y}$.


\begin{figure}[h]
    \centering
    \begin{subfigure}{0.4\textwidth}
        \centering
        \includegraphics[width=\linewidth]{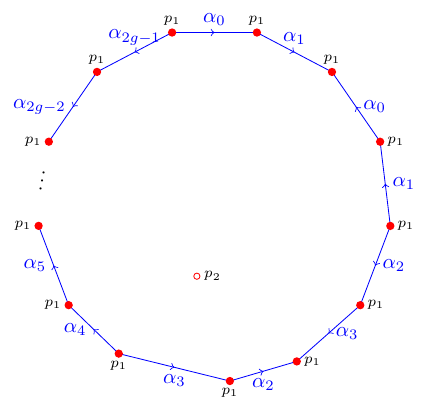}
        \caption{}
        \label{MAxMINa}
    \end{subfigure}
    \hspace{1cm} 
    \begin{subfigure}{0.4\textwidth}
        \centering        \includegraphics[width=\linewidth]{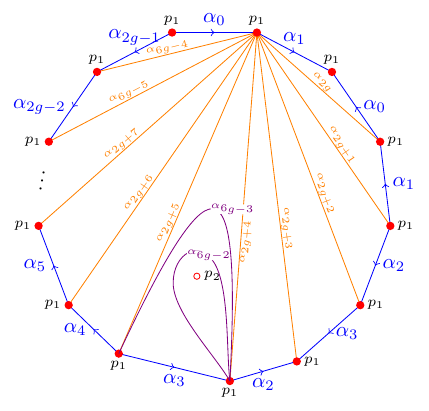}
        \caption{}
        \label{MAxMINb}
    \end{subfigure}

    \caption{\small Arcs that give the arc systems $\asA_0$ of minimal  weight  and $\asA_{4g-1}$ of maximal weight.}
 
\end{figure}

\end{example}


\begin{proof}[Proof of \cref{thm:main}]
    Note that, whenever $\asA_1\subset \asA_2$ and $\asA_1$ fills up $F_0$, we have that $\asA_2$ also fills up.  Recall that every maximal arc system in $F_0$ fills up the surface and is of rank $6g-7+2s+m$ (\cref{lem:dimension:maximal:arc:system}), and every arc system is contained in a maximal one (\cref{prop:A:contained:in:maximal}). Hence to compute the dimension of $\calY$ it is enough to determine the rank of a minimal arc system that fills up $F_0$. Now we split the proof into two cases.

    \textbf{Case 1: $m<s$.} In this case the set $P$ is non-empty and $F_0$ is a punctured surface. 
    
    We claim an arc system $\asA$  is minimal among the arc systems that fill up $F_0$ if and only if it splits $F_0$ into pieces all of which are once-punctured discs.  First assume  that $\asA$ fills up $F_0$, it is minimal and there is a connected component $B$ of corresponding splitting that is homeomorphic to a disc. Then, by connectedness of $F_0$, an edge of $B$ must have the same label as an edge of another connected component (this connected component exists because $P$ is nonempty). Thus removing the corresponding arc we get a strictly smaller arc system that fills up $F_0$. For the converse, assume that $\asA$ splits $F_0$ into once-punctured discs and let $B$ be a connected component of the corresponding splitting. Then removing one arc from $B$ will have the effect of either gluing two different pieces or gluing two edges of the same piece, in either case the resulting arc system do not fill up $F_0$.

 Let $\asA$ a minimal arc system in $\calY$, we claim the rank $r$ of $\asA$ is $2g+s-3$. Then $\asA$ induces a CW-structure on $F$ with $m$ 0-cells, $r+1$ 1-cells, and $s-m$ 2-cells (one for each once-punctured disc in the splitting of $F_0$). Hence, as $F$ is a closed surface of genus $g$, the Euler characteristic of $F$ leads to 
 \[2-2g=m-(r+1)+(s-m)=-r+s-1,\]
hence $r=2g+s-3$. Therefore
\[\dim(\calY)=(6g-7+2s+m)-(2g+s-3)=4g-4+s+m.\]

\textbf{Case 2: $m=s$.} In this case the set of punctures is empty, and any arc system that fills up the surface determines a cellular structure on $F=F_0$. We claim an arc system is minimal among the arc systems that fill up if and only if it splits $F$ into exactly one disc. The proof of this claim is similar to the analogous claim in the previous case. A similar Euler characteristic computation as in the previous case implies that the rank $r$ of a minimal arc system that fills up $F$ satisfies
\[2-2g=s -(r+1)+1=s-r,\]
hence $r=2g+s-2$. Therefore
\[\dim(\calY)=(6g-7+2s+m)-(2g+s-2)=4g-5+s+m.\]
    
\end{proof}

We end by explaining how the dimension of  Harer's spine when $s=1$, and a Briman exact sequence argument, imply the existence of a model for $\underline E \pmodgs$ of minimal dimension for $s\geq 2$. In \cite[Theorem 4.2]{Gui2010}, G. Mislin also used a Birman exact sequence argument to show the existence of co-compact models for $\underline E \pmodgs$; see also the argument in \cite[Remark 4.5]{AMP14} which applies for pure mapping class groups.

Recall that the \emph{proper geometric dimension of a group $G$}, denoted $\gdfin(G)$, is the minimum $n$  for which there exists an $n$-dimensional model for $\underline{E}G$.  If $G$ is virtually torsion free, then $\vcd(G)\leq \gdfin{G}$ (\cite[Theorem 5.23]{Lu05}).

\begin{corollary}\label{Propgd}
For $s\geq 1$   and  $2g+s>2$, there exists a model for $\underline E\pmodgs$ of dimension equal to $\vcd(\pmodgs)$.
\end{corollary}
\begin{proof}
 From \cref{thm:main},  for $g=0$ and $s\geq 3$, the dimension of Harer's spine  $\mathcal{Y}$ is $\vcd(\PMod_0^s)=s-3$, and for $g\geq 1$ and $s=1$, we have that $\dim \mathcal{Y}=\vcd(\PMod_g^1)=4g-3$. Therefore, in these cases Harer's spine $\mathcal{Y}$ is a model of $\underline{E}\PMod_g^s$ of minimal dimension.

Now let $s\geq 2$  and $g\geq 1$. Then $\pi_1(F_g^{s-1})$ is a nonabelian free group and recall that, due to a well-known theorem by Dunwoody, any finite extension $\Delta$ of a non trivial finitely generated free group  has $\gdfin(\Delta)=1$. Hence, from \cite[Theorem 5.16]{Lu05} applied to the Birman short exact sequence 
$$1\rightarrow\pi_1(F_g^{s-1})\rightarrow \pmodgs\rightarrow \PMod_g^{s-1}\rightarrow 1$$
it follows from an inductive argument that
$$\gdfin(\pmodgs)\leq \gdfin(\PMod_g^{s-1})+ 1\leq \gdfin(\PMod_g^{1})+ (s-1)=\vcd(\PMod_g^1)+(s-1) =\vcd(\pmodgs).$$
\end{proof}

\bibliographystyle{alpha} 
\bibliography{mybib}
\end{document}